\documentclass[11pt, reqno]{amsart}

\usepackage{amsmath}
\usepackage{amsthm}
\usepackage{amssymb}
\usepackage{amsfonts}
\usepackage[mathscr]{eucal}
\usepackage{latexsym}
\usepackage{graphicx}
\usepackage{xcolor}
\usepackage{hyperref}
\usepackage{enumerate}

\newtheorem{prop}{Proposition}[section]
\newtheorem{cor}[prop]{Corollary}
\newtheorem{thm}[prop]{Theorem}
\newtheorem{lem}[prop]{Lemma}

\theoremstyle{definition}

\newtheorem{example}[prop]{Example}

\renewcommand{\phi}{\varphi}
\renewcommand{\Re}[1]{\operatorname{Re} #1}

\newcommand{\C}{\mathbb{C}}

\newcommand{\R}{\mathbb{R}}



\DeclareTextFontCommand{\textnf}{\normalfont}



\newcommand{\ol}{\overline}


\begin{document}

\title{Positive functionals and Hessenberg matrices}

\author{Jean B. Lasserre}
\address[J-B.~Lasserre]{LAAS-CNRS and Institute of Mathematics, University of Toulouse, France}
\email{\tt lasserre@laas.fr}

\author{Mihai Putinar}
\address[M.~Putinar]{University of California at Santa Barbara, CA,
USA and Newcastle University, Newcastle upon Tyne, UK} 
\email{\tt mputinar@math.ucsb.edu, mihai.putinar@ncl.ac.uk}

\date{\today}

\keywords{positive functional, cubature formula, normal matrix, Hessenberg matrix, harmonic polynomial}

\subjclass[2010]{}

\begin{abstract} Not every positive functional defined on bi-variate polynomials of a prescribed degree bound is represented by the integration against a positive measure.
We isolate a couple of conditions filling this gap, either by restricting the class of polynomials to harmonic ones, or imposing the vanishing of a defect indicator. Both criteria offer
constructive cubature formulas and they are obtained via well known matrix analysis techniques involving either the dilation of a contractive matrix to a unitary one or the specific structure of the Hessenberg matrix associated to the multiplier by the underlying complex variable.

\end{abstract}
\maketitle

\section{Introduction} Typically, a positive functional on a space of continuous functions is represented as the integral
against a positive measure. The departure from this paradigm is notable on finite dimensional spaces, turning the existence of a
 Riesz representation theorem into the basic quest of numerical cubature, solutions of the truncated moment problem or algebraic
 certificates for positive elements. 
 
 The present note analyses the structure of positive functionals on spaces of bi-variate real polynomials of a prescribed degree. In spite of the ample 
 references and recent progress on this topic, there are still challenging open questions and notorious difficulties, mostly related to the constructive aspects.
 Our approach uses complex variables, hence Hermitian forms over complex variables rather than quadratic forms over the real field. The natural Hilbert space realization of Hermitian forms is
 also prominent in our note. The balance between Hermitian algebra and Hilbert space geometry turns out to be beneficial for entering into the fine structure of positive functional on polynomial spaces.

 Specifically, we import from operator theory the well known Sz.-Nagy unitary dilation of a contractive matrix for constructing numerical cubatures for {\it harmonic} polynomials; second,
 we identify, via a normality criterion in matrix analysis, the numerical obstruction of a positive functional on polynomials bi-variate polynomials of a prescribed degree to possess a cubature on a
 real subspace of roughly half-dimension, but certainly containing all polynomials of half-degree. In this quest we naturally touch the structure of the Hessenberg matrix representing the multiplier by the complex variable, and reveal its normality up to a rank-one additive perturbation. A normal Hessenberg matrix characterizes finite point cubatures for polynomials of a fixed maximum degree in the two variables. For the reader versed in some operator theory, this part of our note deals with the Krylov subspace analysis of truncations of a hyponormal operator.
 
 A unifying conclusion of our observations is that one has to pay a price for a positive functional to admit numerical quadratures (with positive weights). Namely one has to drop the expectations/identities to some distinguished subspaces of polynomials of approximatively half the original dimension. Undoubtedly there are other paths to achieve such cubature formulas on privileged polynomials, such it is the case of quadrature domains for harmonic functions. A common feature of our study is the numerical and computational accessibility to cubatures, modulo ubiqutiuous matrix analysis techniques, having as solely input data the moments of the original functional.
 
  While a selected group of operator theory experts may find the main results below not surprising, our aim is to disseminate to a larger audience some possibly novel, but very basic and versatile matrix analysis techniques. The recent authoritative monograph by Schm\"udgen \cite{Sch} contains ample details and references, old and new, on the structure of positive functionals on polynomial subspaces, cubature formulas, orthogonal polynomials and (truncated) moment problems. All presented from the traditional point of view of the functional analyst or function theorist. Hessenberg matrices however are cultivated by rather disjoint groups of mathematicians, notably in numerical analysis \cite{LS} and approximation theory \cite{Simanek}.
 \bigskip
 
 {\bf Acknowledgement.} 
Research of the first author and visit of the second  author at LAAS in Toulouse, funded by the European Research Council (ERC) under the European Union's Horizon 2020 research and innovation program (grant agreement ERC-ADG 666981 TAMING).

\section{Preliminaries}

We denote by $z = x+iy$ the complex variable in $\C$, identified with $\R^2$.
For a fixed positive integer $d$, $\C_d[z]$ is the complex vector space of polynomials
of degree less than or equal to $d$, and similarly for $\C_d[z,\ol{z}]$, respectively $\C_d[x,y]$
where in the case of two variables the total degree is adopted. Similarly $\C_{k, \ell}[z,\ol{z}]$
stands for the vector space of polynomials of bidegree $k$ in $z$, respectively $\ell$ in $\ol{z}$.

The starting point is a linear functional
$$ L : \C_{2d}[z,\ol{z}] \longrightarrow \C$$
which is real $L(\ol{p}) = \ol{L(p)}, \ p  \in  \C_{2d}[z,\ol{z}] ,$
and {\it positive semidefinite}, that is
$$ L( |f(z,\ol{z})|^2) \geq 0,$$ for all $f  \in  \C_{d}[z,\ol{z}] $.
In particular, but not equivalently,  $L$ is {\it hermitian positive semidefinite}, meaning that
$$ L( |f(z)|^2) \geq 0,\  f  \in  \C_{d}[z] .$$
A {\it hermitian square} is by definition a real polynomial of the form $|p(z)|^2$ with $p \in \C[z]$.

The inner product and associated semi-norm 
$$ \langle p, q \rangle_L = L(p \ol{q}), \ \ \| p \|_L^2 = L(|p|^2),$$
are considered either on $ \C_{d}[z,\ol{z}] $ or on the subspace $\C_d [z]$.
In either case, Cauchy-Schwarz inequality holds
$$ |\langle p, q \rangle_L| \leq \| p \|_L \|q \|_L,$$
as well as triangle inequality.
An immediate consequence of the latter is that the set of null vectors
$$ N = \{ p \in C_{d}[z,\ol{z}], \ \|p\|_L = 0\},$$
is a vector subspace of $C_{d}[z,\ol{z}]$. The quotient space
$H_d = \C_{d}[z,\ol{z}]/N$ endowed with the induced norm $\| \cdot \|_L$ is therefore a Hilbert space of finite dimension.

For any element $f \in \C_{d-2}[z,\ol{z}] $ Cauchy-Schwarz inequality implies 
$$ \| z f \|_L^2 = L( |z|^2 |f|^2) \leq \| |z|^2 f \|_L \|f \|_L.$$ In particular, if
$ \| f \|_L = 0,$ then $\|z f \|_L = 0$. Thus multiplication by the complex variable 
is well defined as an induced linear transformation
$$ M_z : \C_{d-2}[z,\ol{z}]/(N \cap \C_{d-2}[z,\ol{z}]) \longrightarrow \C_{d-1}[z,\ol{z}]/(N \cap \C_{d-1}[z,\ol{z}]).$$
The above quotient spaces can be interpreted thanks to the second isomorphism theorem as subspaces of $H_d$.

Similarly we can define the subspace $A_d = C_{d}[z]/(C_{d}[z] \cap N) $ and speak about the well defined operator
$M_z : A_{d-2} \longrightarrow A_{d-1}$. Denote by $\pi_{d-2}$ the orthogonal projection of $A_d$ (or even $H_d$) onto
$A_{d-2}$. When there is no danger of confusion we simply denote $\pi = \pi_{d-2}$. The compressed operator
$$ M = M_{d-2} = \pi M_z|_{A_{d-2}}$$
is therefore well defined and bounded, as an endomorphism of a finite dimensional Hilbert space.

\section{Quadrature formulas}  We start by isolating a direct consequence of Sz-Nagy dilation theorem.

\begin{thm}\label{harmonic} Let $L : \C_{2d+2}[z,\ol{z}] \longrightarrow \C$ be a real functional which is non negative on hermitian squares, and
let $R = \| M_{d}  \|,$ with the notation adopted in the preliminaries. There are at most $N \leq (d+1)^2$ nodes $z_k = R e^{i \theta_k}$ on the circle with center at $z=0$ and radius $R$,
and positive weights $c_k >0, 1 \leq k \leq N$, such that
$$ L(h) = \sum_{k=1}^N c_k h(z_k, \ol{z}_k),$$
for all harmonic polynomials $h  \in \C_{d,d}[z,\ol{z}].$
\end{thm}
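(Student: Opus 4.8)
The strategy is to realize the compressed multiplication operator $M = M_d = \pi M_z|_{A_d}$ as the compression of a normal operator, then use the spectral theorem on the dilation to produce the nodes and weights. First I would observe that $M$ is a contraction after rescaling: since $\|z f\|_L^2 = L(|z|^2|f|^2) \le R^2 \|f\|_L^2$ fails in general, what is actually true is $\|M_d\| = R$ by definition, so $T := M_d / R$ is a contraction on the finite-dimensional Hilbert space $A_d$. By the Sz.-Nagy dilation theorem, $T$ dilates to a unitary $U$ on a larger finite-dimensional space $K \supseteq A_d$, with $\dim K \le 2(d+1)^2$ (or one can be more economical), so that $T^n = P_{A_d} U^n|_{A_d}$ for all $n \ge 0$, and by taking adjoints also for $n < 0$. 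Rescaling back, $R^{|n|} \cdot (\text{the relevant power of } U)$ reproduces the mixed moments $L(z^j \ol z^k)$ for the range of $j,k$ that fits inside $A_d$ — this is the key point and where the degree bookkeeping must be done carefully.

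Next I would apply the spectral theorem to the unitary $U$: write $U = \int_{\T} \lambda \, dE(\lambda)$ with $E$ a projection-valued measure supported on finitely many points $\lambda_k = e^{i\theta_k}$ (finitely many because $K$ is finite-dimensional), and set $z_k = R\lambda_k = R e^{i\theta_k}$. Let $\xi \in A_d$ be the image of the constant polynomial $1$, normalized so that $\|\xi\|_L^2 = L(1)$, and define the weights $c_k = \langle E(\{\lambda_k\})\xi, \xi\rangle \ge 0$. Then for $0 \le j, k \le d$ with the product $z^j \ol z^k$ landing in the admissible range,
\[
L(z^j \ol z^k) = \langle M_z^j \xi, M_z^k \xi \rangle_L = R^{j+k}\langle U^j \xi, U^k \xi\rangle = \sum_k c_k R^{j+k} e^{i(j-k)\theta_k} = \sum_k c_k z_k^j \ol z_k^k.
\]
Since a harmonic polynomial $h \in \C_{d,d}[z,\ol z]$ is a linear combination of pure powers $z^j$ ($0 \le j \le d$) and $\ol z^k$ ($0 \le k \le d$), and such monomials are captured by the identities above with one of the indices equal to $0$ (hence certainly in range), the cubature formula $L(h) = \sum_k c_k h(z_k, \ol z_k)$ follows by linearity. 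The count $N \le (d+1)^2$ comes from discarding nodes with $c_k = 0$ and noting that the effective number of nodes needed is bounded by $\dim A_d \le (d+1)^2$; if the naive dilation gives more, one can prune via a standard Tchakaloff-type / linear-algebra reduction of the cubature.

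The main obstacle I anticipate is the degree bookkeeping in the second step: one must check that the powers $U^j \xi$ genuinely reproduce $M_z^j \xi = $ (class of $z^j$) in $A_d$ for all $0 \le j \le d$, not just that $M_d^j$ agrees with $P_{A_d}U^j|_{A_d}$. The subtlety is that $M_d$ is the \emph{compressed} operator $\pi M_z|_{A_d}$, so a priori $M_d^j \xi \ne$ class of $z^j$ because intermediate projections $\pi$ intervene. The resolution is that $\xi = [1]$ lies in $A_0 \subseteq A_d$, and multiplication $M_z$ maps $A_{j-1}$ into $A_j$; since we are evaluating $L(z^j \ol z^k)$ with the hypothesis that $L$ is defined on $\C_{2d+2}[z,\ol z]$ and harmonic test polynomials only need $j \le d$ or $k \le d$ with the other index zero, the relevant monomial $z^j$ genuinely sits in $A_d$ and no projection is lost along the Krylov chain $[1], [z], \dots, [z^d]$ — this is exactly why the hypothesis is stated with degree $2d+2$ and the conclusion restricted to $\C_{d,d}$. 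I would isolate this as a short lemma: $\langle M_z^j[1], M_z^k[1]\rangle_L = L(z^j\ol z^k)$ for $0 \le j,k \le d$, proved by induction using the well-definedness of $M_z$ from the preliminaries, and the fact that $R = \|M_d\|$ controls the norm on the full chain because $\|z^{j+1}\|_L \le R\|z^j\|_L$ is not needed — only the contractivity of $T$ on $A_d$ enters the dilation, while the moment-reproducing identity uses $M_z$ itself.
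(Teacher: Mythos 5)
Your proposal follows the paper's proof essentially verbatim: rescale $M_d$ to a contraction $T=M_d/R$, invoke the Egerv\'ary/Sz.-Nagy finite-dimensional unitary dilation, diagonalize the unitary, and use that harmonic polynomials in $\C_{d,d}[z,\ol z]$ are spanned by the pure powers $z^j$ and $\ol z^k$, so that only the moments $\langle U^n\mathbf 1,\mathbf 1\rangle$, $0\le n\le d$, are needed. Two caveats: a finite-dimensional unitary cannot satisfy $T^n=P_{A_d}U^n|_{A_d}$ for \emph{all} $n\ge 0$ --- the finite-dimensional dilation theorem only gives agreement up to a prescribed power $N$, with $\dim K\le (N+1)\dim A_d$, and it is this (with $N=d$, $\dim A_d\le d+1$) that yields the node bound $(d+1)^2$, not ``$\dim A_d\le (d+1)^2$'' as you write. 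More importantly, your displayed identity $L(z^j\ol z^k)=R^{j+k}\langle U^j\xi,U^k\xi\rangle$ is false when $j$ and $k$ are both nonzero: unitarity gives $\langle U^j\xi,U^k\xi\rangle=\langle U^{j-k}\xi,\xi\rangle$, which only recovers a rescaled $L(z^{j-k})$, not the mixed moment; were the display true, the cubature would extend to all of $\C_{d,d}[z,\ol z]$, contradicting the premise of the paper. Since you subsequently invoke it only with one index equal to zero, the argument as actually used is sound.
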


\begin{proof} The linear transform $T= M_{d}/R$ is contractive on the space $A_{d}$ of complex polynomials. Counting monomials as generators we find that $\dim A_{d} \leq d+1$.
By the finite dimensional analog of Sz.-Nagy dilation theorem, there exists a Hilbert space $K$ of dimension less than or equal to $(d+1)^2$, containing isometrically $A_d$, and a
unitary transformation $U: K \longrightarrow K$ with the property:
$$ \langle T^k {\bf 1}, {\bf 1} \rangle _L = \langle U^k {\bf 1}, {\bf 1}\rangle_K, \ \ 0 \leq k \leq d.$$
Throughout this note ${\bf 1}$ stands for the constant function equal to $1$.
For a proof see Sz.-Nagy appendix to the Functional Analysis treatise \cite{RN}, or the original construction in \cite{E}.
The spectral resolution of the unitary matrix $U$:
$$ U = \sum_{k=1}^{(d+1)^2} e^{i \theta_k} \langle \cdot, f_k\rangle f_k,$$
implies for any element $p\in \C_d[z]$:
$$ L(p) = \langle p(T) {\bf 1}, {\bf 1} \rangle _L = \langle p(U) {\bf 1}, {\bf 1}\rangle_K = $$
$$ \sum_k p(e^{i \theta_k}) |\langle {\bf 1}, f_k\rangle|^2.$$
Above $f_k$ are the unit, mutually orthogonal eigenvectors of $U$.

Since every real valued harmonic polynomial $h  \in \C_{d,d}[z,\ol{z}]$ can be represented as the real part of a complex polynomial of degree $d$ or less, the proof is complete.
The bound $N \leq (d+1)^2$ counts the non-zero weights $c_k =  |\langle {\bf 1}, f_k\rangle|^2.$

\end{proof} 

It is relevant for applications to remind that the construction of the larger unitary matrix $U$ is explicit, on blocks of size $(d+1)\times (d+1)$ involving at most two square roots of
positive matrices, to be more specific, with the notation in the proof, the {\it defect matrices} $\sqrt{I-T^\ast T}$ and $\sqrt{I-TT^\ast}$, cf. \cite{E}.

Note that in the proof of the preceding theorem we have used only the hermitian positivity of the linear form $L$. 
If we assume $L$ positive semi-definite, then we may expect more. We elaborate two such consequences.

First, remark that a real polynomial change of variables $X = X(z,\ol{z}), Y = Y(z,\ol{z})$ (specifically $X = \ol{X}$ and $Y = \ol{Y}$) enables to consider the subalgebra $\C[X,Y]$ and the harmonic polynomials
there. That is pull-backs by the base change of harmonic polynomials in the variables $X,Y$. Since we deal with bounded degrees we have to impose the necessary degree reductions. Assume that
$\max( \deg X, \deg Y) \leq n$. Then the functional $L$ defines by pull-back a non-negative functional on $C_{2e+2} [X,Y]$ with $2e+2$ the largest even integer less than or equal to $2d/n$. 
The corresponding ``complex" variable is $Z = X(x,y) + i Y(x,y)$ and so on. We leave the details 
of adapting Theorem \ref{harmonic} to the reader.

As simple as it might be, the following observation is instrumental for the rest of the article.

\begin{lem}\label{compression}  Assume the real functional $L : \C_{2d+2}[z,\ol{z}] \longrightarrow \C$ is non-negative on hermitian squares and denote
$M = \pi_d M_z \pi_d$.Then
\begin{equation}
L(p\ol{q}) = \langle p(M){\bf 1}, q(M) {\bf 1}\rangle_L,
\end{equation}
provided $\max(\deg(p), \deg(q)) \leq d+1, \deg(p\ol{q}) \leq 2d+1.$
\end{lem}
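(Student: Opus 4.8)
The plan is to make the right-hand side completely explicit first, and then to collapse it onto $L(p\ol q)$ using the self-adjointness of the orthogonal projection $\pi_d$ together with the degree constraint.

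\emph{Step 1: computing $p(M){\bf 1}$.} I would first record the Krylov vectors $M^k{\bf 1}$. Since $\pi_d$ restricts to the identity on $A_d$ and the classes of $1,z,\ldots,z^d$ all lie in $A_d$, an easy induction on $k$ gives $M^k{\bf 1}=z^k$ (the class of $z^k$) for $0\le k\le d$; at the next power the projection becomes active and produces $M^{d+1}{\bf 1}=\pi_d M_z(z^d)=\pi_d(z^{d+1})$. By linearity it follows that $p(M){\bf 1}=\pi_d p$ for every $p\in\C_{d+1}[z]$: indeed $p(M){\bf 1}$ and $\pi_d p$ agree term by term on the monomials of degree $\le d$, while on the top monomial $z^{d+1}$ both sides act by $\pi_d$. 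The same identity holds for $q$, so the right-hand side of the claimed formula equals $\langle \pi_d p,\pi_d q\rangle_L$.

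\emph{Step 2: removing the projections.} This is where the hypothesis is used: $\deg(p\ol q)=\deg p+\deg q\le 2d+1$ forces at least one of $p,q$ to have degree $\le d$; say $\deg q\le d$, the situation being symmetric. Then the class of $q$ already lies in $A_d$, so $\pi_d q=q$. Decomposing $p=\pi_d p+(I-\pi_d)p$ and using $(I-\pi_d)p\perp A_d\ni q$ gives $\langle \pi_d p,\pi_d q\rangle_L=\langle \pi_d p,q\rangle_L=\langle p,q\rangle_L$. Finally, since $\deg(p\ol q)\le 2d+1\le 2d+2$ the product $p\ol q$ lies in the domain of $L$, so $\langle p,q\rangle_L=L(p\ol q)$ by definition, which closes the argument.

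I expect the only delicate point to be the bookkeeping in Step 1: keeping careful track of which space $M$ and its powers act on, so that $p(M){\bf 1}$ is legitimate up to $\deg p=d+1$ (invoking the well-definedness of the multiplier recorded in the preliminaries), and noticing that the truncation first bites exactly at degree $d+1$, so that one obtains $\pi_d p$ rather than $p$. Once this is in hand, Step 2 is a one-line orthogonality computation, and it is transparent that the degree hypothesis is doing precisely the job of ensuring that one never has to project $p$ and $q$ simultaneously — which is also why the identity must fail when $\deg(p\ol q)=2d+2$, the defect there being $\|(I-\pi_d)z^{d+1}\|_L^2$.
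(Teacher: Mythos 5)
Your argument is correct and follows essentially the same route as the paper's (very terse) proof: both rest on the observation that $M^k{\bf 1}=z^k$ for $k\le d$ while $M^{d+1}{\bf 1}=\pi_d(z^{d+1})$, so that $p(M){\bf 1}=\pi_d p$ for $\deg p\le d+1$, and on the fact that the hypothesis $\deg(p\ol q)\le 2d+1$ forces at least one of $p,q$ to lie in $A_d$, after which the remaining projection is absorbed by self-adjointness of $\pi_d$. You have merely written out in full the orthogonality step that the paper compresses into the remark that both polynomials cannot simultaneously have degree $d+1$.
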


For the proof note that 
$$ M p(M){\bf 1} = M p(z) = \pi_{d} (z p(z))$$
whenever $\deg(p) \leq d.$ In short we do not allow in the Lemma both polynomials $p$ and $q$ to have maximal degree $d+1$.

Second, and more interesting, are some additional constraints on the matrix 
$M = M_{d}$, assuming that the functional $L$ is defined on $\C_{2d+2}[z,\ol{z}]$
and it is non-negative on all real squares.

Let $f(z,\ol{z})$ be a polynomial of degree at most $d$ in $z$, respectively $\ol{z}$:
$$ f(z, \ol{z}) = \sum_{j,k=0}^{d} f_{jk} z^j \ol{z}^k.$$ 
The positivity of $L$ implies in view of Lemma \ref{compression}
$$ 0 \leq L(|f|^2) = \sum_{j,k,r,s} f_{jk} \ol{f_{rs}} L(z^{j+s}\ol{z}^{k+r}) = \sum_{j,k,r,s} f_{jk} \ol{f_{rs}} \langle M^{j+s} {\mathbf 1}, M^{k+r} {\mathbf 1}\rangle.$$
Definitely these are non-trivial constraints on the matrix $M = M_{d}$. If we simplify the form of $f$ to
$$ g(z,\ol{z}) = p(z) + \ol{z} q(z),$$
 Lemma \ref{compression} yields
$$ 0 \leq L(|g|^2) = L(|p(z)|^2 + |z|^2 |q(z)|^2) + 2\Re L(\ol{z} \ol{p(z)}q(z)) =$$
$$ \| p \|^2 + \| M q \|^2 + 2\Re \langle Mp, q\rangle$$
whenever $\deg(p) \leq d$ and $\deg(q) \leq d-1$.
This is reminiscent of Halmos-Bram subnormality condition for the matrix $M$,
except that the quadratic form, defined on $A_d \oplus A_d$:
$$ \sigma_M(p,q) = \| p \|^2 + \| M q \|^2 + 2\Re \langle Mp, q\rangle$$
is positive semi-definite only on the codimension-one subspace $A_d \oplus A_{d-1}$.
For details about the operator theory background see for instance Halmos' problem book \cite{Hal}.
Consequences of such positivity conditions, and in particular relations to moment problems are explicitly
discussed in the monograph \cite{MP} and the article \cite{CP}, where a question raised by Halmos was solved.

If we impose the stronger condition
\begin{equation}\label{hypo}
 \| p \|^2 + \| M q \|^2 + 2 \Re \langle M p, q \rangle \geq 0, \ \ p, q \in A_{d},
 \end{equation}
then the matrix $M$ is what is called hyponormal, that is $[M^\ast,M] \geq 0$. Since the trace of the commutator of two matrices is always zero, we infer that
$M$ is actually normal. The spectral resolution for normal matrices will then provide the desired quadrature formula.

\begin{thm}\label{real-pos} Let $L : \C_{2d+2}[z,\ol{z}] \longrightarrow \C$ be a real functional which is non negative on all real squares.
Assume that $L$ is strictly positive on hermitian squares of bi-degree less than or equal to $(d,d)$.

There are at most $d+1$ points $a_k \in \C$ and weights $c_k>0$, so that
\begin{equation}\label{qd}
 L(p(z) \ol{q(z)}) = \sum_{k=1}^N c_k p(a_k) \overline{q(a_k)} 
 \end{equation}
for all $p,q \in \C_{d+1}[z]$, subject to $\deg(p\overline{q}) \leq 2d+1$
if and only if the only non-positive eigenvalue of the quadratic form $\sigma_M$ is equal to zero.
\end{thm}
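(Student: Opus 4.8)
The plan is to read the theorem as a statement about when the Hessenberg matrix $M = M_d = \pi_d M_z \pi_d$ is normal, and then to transport the spectral resolution of $M$ back to $L$ via Lemma \ref{compression}. The strict positivity hypothesis on hermitian squares of bi-degree $(d,d)$ guarantees that $A_d = \C_d[z]$ (no null vectors among polynomials of degree $\le d$), so $M$ is a genuine $(d+1)\times(d+1)$ matrix, the cyclic vector $\mathbf 1$ generates $A_d$ under $M$, and the Hessenberg structure (lower Hessenberg, with nonzero subdiagonal) is in force. I would first record the elementary linear algebra fact that $\sigma_M(p,q) = \|p\|^2 + \|Mq\|^2 + 2\,\Re\langle Mp,q\rangle$, viewed as a Hermitian form on $A_d\oplus A_d$, is congruent to the block matrix $\begin{pmatrix} I & M^\ast \\ M & M^\ast M\end{pmatrix}$; taking a Schur complement with respect to the invertible upper-left block $I$ shows this form is positive semidefinite precisely when $M^\ast M - M M^\ast \ge 0$, i.e.\ when $M$ is hyponormal, and its number of negative eigenvalues equals the number of negative eigenvalues of the commutator $[M^\ast,M]$. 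The hypothesis ``the only non-positive eigenvalue of $\sigma_M$ is $0$'' therefore says $[M^\ast,M]\ge 0$ and $[M^\ast,M]$ is singular; but since $\tr[M^\ast,M]=0$ always, $[M^\ast,M]\ge 0$ forces $[M^\ast,M]=0$, so the condition is equivalent to $M$ being normal. (The extra eigenvalue $0$ of $\sigma_M$ is forced regardless, coming from the kernel direction in $A_d\oplus A_{d-1}$ that Lemma \ref{compression} leaves unconstrained, so the condition is really just ``$\sigma_M\ge 0$''.)

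For the ``if'' direction, assume $M$ is normal. By the spectral theorem write $M = \sum_{k=1}^{N} a_k\,\langle\cdot,e_k\rangle e_k$ with $N\le d+1$, $a_k\in\C$ distinct, $\{e_k\}$ orthonormal. Then for any $p\in\C[z]$ we have $p(M) = \sum_k p(a_k)\langle\cdot,e_k\rangle e_k$, so Lemma \ref{compression} gives, for $p,q\in\C_{d+1}[z]$ with $\deg(p\bar q)\le 2d+1$,
\[
L(p\bar q) = \langle p(M)\mathbf 1, q(M)\mathbf 1\rangle_L = \sum_{k=1}^N p(a_k)\,\overline{q(a_k)}\;|\langle \mathbf 1, e_k\rangle|^2,
\]
which is (\ref{qd}) with $c_k = |\langle\mathbf 1,e_k\rangle|^2\ge 0$. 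Positivity of $L$ on hermitian squares together with the cyclicity of $\mathbf 1$ shows each $c_k>0$ (if some $c_k=0$ then $\mathbf 1\perp e_k$ and, because $M$ is normal, the cyclic subspace generated by $\mathbf 1$ omits $e_k$, contradicting cyclicity), so after discarding any such node we keep exactly the nonzero weights; this also re-confirms $N\le d+1 = \dim A_d$.

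For the ``only if'' direction, suppose such a cubature (\ref{qd}) exists. Apply it to $p\bar q$ with $\deg p,\deg q\le d$: then $L(z\,p\,\bar q) = \sum_k a_k p(a_k)\overline{q(a_k)}$ and $L(|z|^2 p\bar q)=\sum_k |a_k|^2 p(a_k)\overline{q(a_k)}$, and comparing with $\langle Mp,q\rangle_L$ and $\langle Mp,Mq\rangle_L = \langle M^\ast M p,q\rangle_L$ one reads off, on the cyclic space $A_d$, that $\langle M^\ast M p,q\rangle = \langle MM^\ast p, q\rangle$ for all $p,q$ — here one uses that $\langle M^\ast p, q\rangle_L$ is also computed by the cubature as $\sum_k \overline{a_k}\,\cdot$, which follows from $M^\ast$ acting as multiplication by $\bar z$ on the finite-dimensional model, valid in the allowed degree range. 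Hence $M$ is normal, equivalently $\sigma_M\ge 0$, equivalently its only non-positive eigenvalue is $0$. I expect the main obstacle to be the bookkeeping of degree bounds in this last step: one must check that each identity invoked ($M^\ast$ as multiplication by $\bar z$, the various $L$-evaluations) stays strictly inside the range $\deg(p\bar q)\le 2d+1$ where Lemma \ref{compression} and the cubature are both legitimate, and that the congruence/Schur-complement computation for $\sigma_M$ correctly tracks the single forced zero eigenvalue rather than producing a spurious negative one.
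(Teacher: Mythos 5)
Your reduction of the spectral condition on $\sigma_M$ to normality of $M$ (Schur complement of the block matrix $\left(\begin{smallmatrix} I & M^\ast\\ M & M^\ast M\end{smallmatrix}\right)$, then $\tr[M^\ast,M]=0$) is correct and is essentially what the paper does (via Cauchy--Schwarz inside the proof, via the same congruence in the following section), and your ``if'' direction coincides with the paper's: spectral theorem for the normal $M$ plus Lemma \ref{compression}, with cyclicity of $\mathbf 1$ giving $c_k>0$.

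The gap is in the ``only if'' direction. First, the identity $\langle M^\ast Mp,q\rangle_L=\langle Mp,Mq\rangle_L=L(|z|^2p\ol{q})$ that you want to ``read off'' from the cubature fails exactly where it matters: $Mp=\pi_d(zp)$, so for $\deg p=\deg q=d$ one has $\langle Mp,Mq\rangle=\langle\pi_d(zp),\pi_d(zq)\rangle\neq\langle zp,zq\rangle$ in general, and $\deg\bigl(zp\,\overline{zq}\bigr)=2d+2$ lies outside the range where either Lemma \ref{compression} or the assumed cubature applies; the analogous iteration for $\langle M^\ast p,M^\ast q\rangle$ breaks for the same reason. Second, and decisively, your argument never uses the hypothesis $N\le d+1$, and without it the conclusion is false: the paper's own example of the counting measure on the vertices of a regular $n$-gon gives a cubature of the form (\ref{qd}) with $N=n>d+1$ positive nodes, valid whenever $\deg(p\ol{q})\le 2d+1$, while $M_d$ is a Jordan block, hence not normal. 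So no amount of degree bookkeeping can rescue a proof that only ``compares moments''. The paper's route instead exploits cyclicity: strict positivity makes $\mathbf 1,M\mathbf 1,\dots,M^d\mathbf 1$ a basis of $A_d$, so the minimal polynomial $h$ of $M$ has degree exactly $d+1$; then $0=\langle h(M)\mathbf 1,q(M)\mathbf 1\rangle=\sum_kc_kh(a_k)\overline{q(a_k)}$ stays in the admissible range for $\deg q\le d$, and since $N\le d+1$ one may choose $q$ of degree at most $d$ vanishing at all nodes but one, whence $h(a_k)=0$ for every $k$; division by $h$ then extends the identity $\langle P(M)\mathbf 1,Q(M)\mathbf 1\rangle=\sum_kc_kP(a_k)\overline{Q(a_k)}$ to all of $\C[z]$, exhibiting $M$ as multiplication by $z$ on $L^2\bigl(\sum_kc_k\delta_{a_k}\bigr)$, which is normal. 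You need some version of this step.
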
 

We have stated the conclusion in terms of one defect number, to stress that a single numerical criterion
certifies the existence of the quadrature (\ref{qd}). 

\begin{proof} Assume condition (\ref{hypo}) holds. Then 
$$ |\langle Mp, q \rangle| \leq \| p \| \|Mq\|, \ \ p, q \in A_d.$$
On the other hand
$$ |\langle Mp, q\rangle| = |\langle p, M^\ast q\rangle| \leq \|p \| \|M^\ast q\|, \   p, q \in A_d,$$
and the inequality is an equality for $p = M^\ast q$.
Hence
$$ \|M^\ast q\|^2 \leq \|M^\ast q\| \| M q\|, \ q \in A_d,$$ 
and consequently
$$ \|M^\ast q \| \leq \| M q\|$$
regardless $M^\ast q = 0$ or not.
But the later inequality means
$$ \langle M M^\ast q, q \rangle \leq \langle M^\ast M q, q \rangle, \ \ q \in A_d,$$
or in operator inequality terms, the commutator $[M^\ast, M]$ is non-negative.

Since ${\rm trace} [M^\ast, M] =0$ we deduce that $M$ is a normal matrix, with a spectral decomposition
$$ M = \sum_{k=0}^n a_k \langle \cdot, f_k\rangle f_k,$$ 
with $n \leq d+1$. Lemma \ref{compression} implies
$$  L(p(z) \ol{q(z)}) = \langle p(M){\bf 1}, q(M){\bf 1}\rangle = \sum_{k=1}^N p(a_k) \overline{q(a_k)} |\langle {\bf 1}, f_k\rangle|^2$$
for all  $p \in \C_{d+1}[z]$ and $q \in \C_{d+1}[z]$, but excluding the top degree corner, that is so that $\deg(p\overline{q}) \leq 2d+1$.

Conversely, assume that quadrature formula \ref{qd} holds in the given range of bi-degrees. Then we can reverse the preceding computations and find
$$ \langle p(M){\bf 1}, q(M){\bf 1}\rangle = \sum_{k=1}^N c_k p(a_k) \overline{q(a_k)}, $$
whenever $p, q \in \C_{d+1}[z]$ and $\deg(p\overline{q}) \leq 2d+1$. The assumption that the functional $L$
is stricly positive on hermitian squares of bidegree less than or equal to $(d,d)$ is equivalent to the fact that the vectors
${\bf 1}, z = M{\bf 1}, \ldots, z^d = M^d {\bf 1}$ form a basis of the vector space $A_d$. In other terms $M$ is a matrix with a cyclic vector, acting
on a $(d+1)$-dimensional space. Its minimal polynomial $h(\lambda)$ coincides then with the characteristic polynomial $\det(\lambda I - M)$, and in particular
has degree $d+1$. As a matter of fact one can identify $h(\lambda)$ with the orthogonal polynomial in degree $d+1$ of the associated inner product.
Whence
$$ 0 = \langle h(M){\bf 1}, q(M){\bf 1}\rangle = \sum_{k=1}^N c_k h(a_k) \overline{q(a_k)}.$$
By assumption, $N \leq d+1$, so that we can choose the polynomial $q(z)$ of degree at most $d$ to be zero at all points $a_k$ except one.
Consequently $h(a_k) = 0$ for all $k, 1 \leq k \leq N$.

Let $P,Q \in \C[z]$ be arbitrary degree polynomials. The division algorithm yields
$$ P = P_0 h + P_1, \ \ Q = Q_0 h + Q_1,$$
with $\max(\deg(P_1),\deg(Q_1)) \leq d$. Since $P_0(M) h(M) =0$ and $P_0(a_k) h(a_k)=0, 1 \leq k \leq N,$ and similarly for $Q$, we find
the identity
$$ \langle P(M){\bf 1}, Q(M){\bf 1}\rangle = \sum_{k=1}^N c_k P(a_k) \overline{Q(a_k)}$$
valid over the entire polynomial ring $\C[z]$. In other terms the operator $M$ is unitarily equivalent to the multiplication
by the complex variable on Lebesgue space $L^2(\nu)$, with $\nu = \sum_{k=1}^N c_k \delta_{a_k}.$
That is $M$ is normal and the quadratic form $\sigma_M$ is necessarily positive semi-definite.
\end{proof}

A few remarks are in order. One can start with the functional $L$ merely non-negative on hermitian squares, but then
the statement has be adapted to include the positivity of the form $\sigma_M$, and not only the reference to its only possible negative square.
The following section gives more details along this path.
Given the non-degeneracy assumption in the statement of Theorem \ref{real-pos}, we found during the proof that the number $N$ of nodes
is necessarily maximal, that is $N = d+1$. Finally one can compress the multiplier $M_z$ to other subspaces and have cubature formulas derived
from the same normality principle. Some examples collected in the final section will complement these general statements.

\section{The Hessenberg matrix representation} We keep the notation introduced in the previous section and assume that the functional $L : \C_{2d+4}[z,\ol{z}] \longrightarrow \C$ is real and non negative on all real squares.
We adopt the hypothesis of Theorem \ref{real-pos}, namely that $L$ is strictly positive on hermitian squares of bi-degree less than or equal to $(d,d)$. Then one can speak without ambiguity of the associated orthogonal complex polynomials $P_j(z), \ 0 \leq j \leq d+1:$
$$ L(P_j \ol{P_k}) = \delta_{jk}, \ \ 0 \leq j,k \leq d+1.$$
We can also assume that the leading term of $P_j$ is positive:
$$ P_j(z) = \kappa_j z^j + O(z^{j-1}), \ \kappa_j >0, \ \ \ 0 \leq j \leq d+1.$$
In particular the cyclic vector ${\bf 1}$ has the coordinates:
$$ {\bf 1} = (\kappa_0, 0, 0, \ldots, 0)^T.$$
The $(d+1) \times (d+1)$ matrix representation of the compressed multiplier $M = \pi_d M_z \pi_d : \C_d[z] \longrightarrow \C_d[z]$ with respect to the orthonormal basis
$\{ P_0, P_1, \ldots, P_d\}$ has only a first sub-diagonal non-zero:
$$ a_{jk} := \langle M P_k, P_j \rangle = 0, \ \ k < j-1.$$ Such a structure bears the name of a {\it Hessenberg matrix}:
$$ M = \left( \begin{array}{cccccc}
               a_{00}&a_{01}&a_{02}& \ldots& a_{0,d-1}&a_{0d}\\
               a_{10}&a_{11}&a_{12}&\ldots& a_{1,d-1}&a_{1d}\\
               0&a_{21}&a_{22}&\ldots&a_{2,d-1}&a_{2d}\\
               0&0&a_{32}&\ldots& a_{3,d-1}&a_{3d}\\
               \vdots & & \ddots &\ddots & & \vdots\\
               0&0& \ldots & & a_{d,d-1}&a_{dd}\\
               \end{array} \right).$$
Note that the sub-diagonal entries are non-zero:
$$ a_{j+1,j} = \langle M P_j, P_{j+1} \rangle = \langle \kappa_j z^{j+1} + \ldots, P_{j+1} \rangle = \frac{\kappa_j}{\kappa_{j+1}}, \ \ 0 \leq j <d.$$

Returning to the quadratic form $\sigma_M$ defined in the previous section, we turn to its matrix representation:
$$ \sigma_M(p,q) = (\ol{p}, \ol{q}) \left( \begin{array}{cc}
                        I&M^\ast\\
                        M& M^\ast M\\
                        \end{array}\right) \left( \begin{array}{c}
                        p\\
                        q
                        \end{array}\right).$$
         Our assumption implies, as explained in the previous section, that the above block matrix is non-negative on the subspace $C_{d}[z] \oplus C_{d-1}[z]$.  
         
         A notable change of coordinates block-diagonalizes this matrix with self-commutator $[M^\ast, M]$ as one of the blocks:
         $$  \left( \begin{array}{cc}
                        I&0\\
                        -M& I\\
                        \end{array}\right)    \left( \begin{array}{cc}
                        I&M^\ast\\
                        M& M^\ast M\\
                        \end{array}\right)  \left( \begin{array}{cc}
                        I&-M^\ast\\
                        0& I\\
                        \end{array}\right) = \left( \begin{array}{cc}
                        I&0\\
                        0& [M^\ast, M]\\
                        \end{array}\right).$$
                        In virtue of the min-max principle, the self-adjoint matrix $[M^\ast,M]$ has at most one negative eigenvalue.
                        
With some elementary computations, we can say more about the possible negative eigenspace of this self-commutator. To be more precise, start with the one dimension up compression
  $M_{d+1} = \pi_{d+1} M_z \pi_{d+1}$. This is well defined as a linear transform due to the assumption that the original functional is non-negative on $\C_{2d+4}[z,\ol{z}]$. All the above computations hold, except the existence of the orthogonal polynomial $P_{d+2}$, which is not needed in what follows.
  In particular the matrix   $$  \left( \begin{array}{cc}
                        I&M_{d+1}^\ast\\
                        M_{d+1} & M_{d+1} ^\ast M_{d+1}\\
                        \end{array}\right)   $$
                        is non-negative on the subspace $\C_d[z] \oplus \C_d[z]$. 
                        That is, the matrix
                        $$ \left( \begin{array}{cc}
                        I&M^\ast\\
                        M& \pi_d M_{d+1} ^\ast M_{d+1}\pi_d \\
                        \end{array}\right)$$
                        is non-negative on   $\C_d[z] \oplus \C_d[z].$
   But   $$ \pi_d M_{d+1} ^\ast M_{d+1}\pi_d   = M^\ast M + \pi_d M_{d+1} ^\ast (\pi_{d+1}-\pi_d) M_{d+1}\pi_d.$$
   It remains to identify along the orthonormal basis given by the complex polynomials $P_j$ the contribution of the last rank-one matrix:
   $$  \pi_d M_{d+1} ^\ast (\pi_{d+1}-\pi_d) M_{d+1} P_j = \pi_d M_{d+1} ^\ast (\pi_{d+1}-\pi_d) (z P_j (z)) = $$ $$\pi_d M_{d+1} ^\ast (  z P_j (z) - z P_j (z)) = 0, \  j < d,$$
   and                        
$$  \pi_d M_{d+1} ^\ast (\pi_{d+1}-\pi_d) M_{d+1} P_d =  \pi_d M_{d+1}^\ast \langle z P_d, P_{d+1}\rangle P_{d+1} = a_{d, d+1} \pi_d M_{d+1}^\ast P_{d+1}.$$
Note that $K = \pi_d M_{d+1} ^\ast (\pi_{d+1}-\pi_d) M_{d+1}\pi_d$ is a non-negative self-adjoint matrix, with the only non-zero entry on the $d \times d$ diagonal position:
$$ \langle K q, q \rangle = a_{d+1,d}^2 |\langle q, P_d \rangle|^2, \ \ q \in \C_d[z].$$
We infer that the matrix
$$ \left( \begin{array}{cc}
                        I&M^\ast\\
                        M& M^\ast M + K\\
                        \end{array}\right)$$
                        is non-negative on the whole space $\C_d[z]\oplus \C_d[z]$, and via the same elementary transforms we conclude that
                        $[M^\ast,M] + K$ is a non-negative matrix.
  This information gives a lower bound for the only possible negative eigenvalue of the self-commutator:
  $$ \lambda_{-} = \inf \{ \langle [M^\ast,M] q, q \rangle; \ \ \| q \| \leq 1 \}. $$ Indeed, fix a polynomial $q \in \C_d[z]$ of norm less than $1$. Then
  $$ \lambda_{-} + a_{d+1,d}^2 \geq \langle [M^\ast,M] q,q \rangle +    a_{d+1,d}^2   \|q \|^2 \geq    $$ $$
  \langle [M^\ast,M] q,q \rangle + a_{d+1,d}^2 |\langle q, P_d \rangle|^2 = \langle (  [M^\ast,M] + K)q, q \rangle \geq 0.$$       
  
  We collect the above computations into a single statement.
  
  \begin{thm}\label{Hess} Let $L : \C_{2d+4}[z,\ol{z}] \longrightarrow \C$ be a real functional which is non negative on all real squares.
Assume that $L$ is strictly positive on hermitian squares of bi-degree less than or equal to $(d,d)$. Denote by $(a_{jk})_{j,k=0}^{d+1}$ the Hessenberg matrix associated
to the system of complex orthogonal polynomials induced by $L$.

Then the matrix $M = (a_{jk})_{j,k=0}^{d}$ is normal after a rank-one perturbation, in particular the self-commutator $[M^\ast,M]$ has at most one negative eigenvalue $\lambda_{-} \geq - a_{d+1,d}^2.$
Moreover, the matrix $M$ is normal, that is $[M^\ast,M] =0$ if and only if $a_{d+1,d}=0$.
\end{thm}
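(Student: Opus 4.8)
The plan is to assemble the pieces already laid out in the section preceding the statement, organizing them around the two claims: (i) $[M^\ast,M]$ has at most one negative eigenvalue, with the explicit bound $\lambda_- \geq -a_{d+1,d}^2$, and (ii) $M$ is normal exactly when $a_{d+1,d}=0$.

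\smallskip

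For part (i), I would first recall the congruence computation: the block matrix $\left(\begin{smallmatrix} I & M^\ast \\ M & M^\ast M\end{smallmatrix}\right)$ is congruent, via the unipotent transform $\left(\begin{smallmatrix} I & 0 \\ -M & I\end{smallmatrix}\right)$ and its adjoint, to $\left(\begin{smallmatrix} I & 0 \\ 0 & [M^\ast,M]\end{smallmatrix}\right)$. By Theorem \ref{real-pos}'s hypotheses the block matrix is non-negative on the codimension-one subspace $\C_d[z]\oplus\C_{d-1}[z]$, hence by Sylvester's law of inertia $[M^\ast,M]$ is non-negative on a codimension-one subspace, so the min--max (Courant--Fischer) principle forces at most one negative eigenvalue. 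For the lower bound on $\lambda_-$, I would invoke the one-step-up compression $M_{d+1} = \pi_{d+1}M_z\pi_{d+1}$, which is defined thanks to $L$ living on $\C_{2d+4}[z,\ol z]$; the same reasoning shows $\left(\begin{smallmatrix} I & M^\ast \\ M & \pi_d M_{d+1}^\ast M_{d+1}\pi_d\end{smallmatrix}\right)$ is non-negative on the \emph{full} space $\C_d[z]\oplus\C_d[z]$, and the identity $\pi_d M_{d+1}^\ast M_{d+1}\pi_d = M^\ast M + K$ with $K$ the rank-one positive matrix $\langle Kq,q\rangle = a_{d+1,d}^2|\langle q,P_d\rangle|^2$ yields, after the same congruence, that $[M^\ast,M]+K \geq 0$. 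Testing against a unit vector $q$ then gives $\langle[M^\ast,M]q,q\rangle \geq -a_{d+1,d}^2|\langle q,P_d\rangle|^2 \geq -a_{d+1,d}^2$, so $\lambda_- \geq -a_{d+1,d}^2$. This also makes precise the phrase ``normal after a rank-one perturbation'': $[M^\ast,M]+K$ is a genuine self-commutator-plus-rank-one that is positive semidefinite, and one checks $\operatorname{trace}([M^\ast,M]+K) = \operatorname{trace} K = a_{d+1,d}^2$ since commutators are traceless.

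\smallskip

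For part (ii), the ``only if'' direction is immediate: if $M$ is normal then $[M^\ast,M]=0$, so $0 \leq \operatorname{trace}([M^\ast,M]) = 0$ is consistent, but more to the point $[M^\ast,M]+K = K \geq 0$ is already forced, and we need the sharper observation. Actually the clean argument both ways uses the trace: $\operatorname{trace}[M^\ast,M] = 0$ always. If $a_{d+1,d}=0$ then $K=0$, so $[M^\ast,M]\geq 0$; a positive semidefinite matrix of trace zero vanishes, hence $M$ is normal. Conversely, if $M$ is normal then $\|M^\ast q\| = \|Mq\|$ for all $q$; in particular, taking $q = P_d$ and using the Hessenberg structure, $\|M^\ast P_d\|^2 = \|M P_d\|^2$. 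But $M P_d = \pi_d(zP_d)$ picks up the component $a_{d+1,d}$ only through the compression losing the $P_{d+1}$-direction, whereas the relation $a_{d+1,d}^2 = \|\pi_{d+1}(zP_d)\|^2 - \|\pi_d(zP_d)\|^2$ measures exactly that loss; alternatively and more simply, $a_{d+1,d}=0$ iff the full multiplier does not move $P_d$ outside $\C_d[z]$, and normality of the compression combined with the lower bound $\lambda_- \geq -a_{d+1,d}^2$ together with $\lambda_- = 0$ (from normality) gives no contradiction, so I would instead argue the contrapositive through the cyclic-vector/quadrature equivalence: by Theorem \ref{real-pos}, $M$ normal is equivalent to the existence of the quadrature, which is equivalent to $\sigma_M \geq 0$ on all of $A_d\oplus A_d$; but then the computation showing $K$ is the \emph{exact} correction term forces $K$ itself to already be absorbed, i.e. the obstruction $a_{d+1,d}$ must vanish. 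The cleanest route is: $M$ normal $\Rightarrow [M^\ast,M]=0 \Rightarrow \langle(K)q,q\rangle = \langle([M^\ast,M]+K)q,q\rangle \geq 0$ trivially, which says nothing — so one genuinely needs that the block matrix $\left(\begin{smallmatrix}I & M_{d+1}^\ast\\ M_{d+1} & M_{d+1}^\ast M_{d+1}\end{smallmatrix}\right)$ restricted appropriately, after the congruence, equals $[M^\ast,M]+K$, and $M$ normal makes this equal to $K$; since $K\ne 0$ whenever $a_{d+1,d}\ne 0$, and a nonzero $K$ in this position is \emph{incompatible} with the Hessenberg matrix $(a_{jk})_{j,k=0}^{d+1}$ being a genuine multiplication operator's matrix unless it is balanced by the next diagonal entry — this is where I must be careful.

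\smallskip

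The main obstacle, then, is the converse of (ii): showing $M$ normal forces $a_{d+1,d}=0$ rather than merely being consistent with it. I expect the resolution to be the following direct computation. Normality of $M$ means $\|M^\ast P_d\| = \|M P_d\|$. Now $M P_d = \pi_d(zP_d)$, and by the Hessenberg structure $z P_d = \sum_{j=0}^{d} a_{jd}P_j + a_{d+1,d}P_{d+1}$, so $\|M P_d\|^2 = \sum_{j=0}^d |a_{jd}|^2$ while, working in $M_{d+1}$, $\|M_{d+1}P_d\|^2 = \sum_{j=0}^d|a_{jd}|^2 + a_{d+1,d}^2$. On the other hand $M^\ast P_d = \sum_{k} \ol{a_{dk}}P_k = \ol{a_{d,d-1}}P_{d-1} + \ol{a_{dd}}P_d + \ol{a_{d,d+1}}P_{d+1}$-type terms truncated to $\C_d[z]$; the key point is that $\|M^\ast P_d\|^2$ can be computed from the first $d+1$ columns, i.e. it does not see $a_{d+1,d}$ at all, whereas $\|M_z P_d\|^2 = \sum_{j=0}^{d+1}|a_{jd}|^2$ does. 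Comparing $\|M^\ast q\|^2 \leq \|M_z q\|^2$ (hyponormality of the un-truncated picture, from positivity of $L$) with equality $\|M^\ast q\| = \|Mq\|$ from normality of the truncation, one gets $\|Mq\|^2 = \|M^\ast q\|^2 \leq \|M_z q\|^2 = \|Mq\|^2 + \|\pi_{d+1}^{\perp\,d}(zq)\|^2$ where the last term is $a_{d+1,d}^2|\langle q,P_d\rangle|^2$; this is automatically true, so instead one must sum over a basis: $\operatorname{trace}([M^\ast,M]) = 0$ combined with $[M^\ast,M] = [M^\ast,M]+K - K$ and $[M^\ast,M]+K\geq0$ gives $\operatorname{trace}K = \operatorname{trace}([M^\ast,M]+K) \geq 0$ with equality iff $[M^\ast,M]+K = 0$ iff $[M^\ast,M] = -K \leq 0$; but $[M^\ast,M]$ has at most one negative eigenvalue and trace zero, so $[M^\ast,M]\leq 0$ forces $[M^\ast,M]=0$, hence $K=0$, hence $a_{d+1,d}=0$. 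Conversely if $[M^\ast,M]=0$ then $-K = [M^\ast,M] - ([M^\ast,M]+K) \leq 0$ so $K\geq 0$ trivially — this still does not close it, so the genuinely correct argument is simply: $[M^\ast,M]=0$ and $\operatorname{trace}$ of anything rank-one positive is its nonzero eigenvalue; since $[M^\ast,M]+K\geq 0$ and $[M^\ast,M]=0$ we only learn $K\geq0$. I conclude the honest proof of the converse must go through Theorem \ref{real-pos}: $M$ normal $\Leftrightarrow$ quadrature (\ref{qd}) holds $\Leftrightarrow$ $\sigma_M\geq0$ on $A_d\oplus A_d$ $\Leftrightarrow$ (by the congruence) $[M^\ast,M]\geq0$ $\Leftrightarrow$ (trace zero) $[M^\ast,M]=0$; and then running the one-dimension-up argument in reverse, the identity $\pi_d M_{d+1}^\ast M_{d+1}\pi_d = M^\ast M + K$ together with $M_{d+1}^\ast M_{d+1}\pi_d = \pi_d M_z^\ast M_z\pi_d$ and the quadrature extending to degree $d+1$ shows $K$ must vanish, i.e. $a_{d+1,d}=0$. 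I would present this last link carefully as the crux, deferring the purely mechanical trace identities.
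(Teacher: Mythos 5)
Your treatment of the first claim is correct and follows exactly the paper's route: the congruence of $\left(\begin{smallmatrix} I & M^\ast\\ M & M^\ast M\end{smallmatrix}\right)$ to $\left(\begin{smallmatrix} I & 0\\ 0 & [M^\ast,M]\end{smallmatrix}\right)$, non-negativity on the codimension-one subspace $\C_d[z]\oplus\C_{d-1}[z]$ giving at most one negative eigenvalue, and the one-step-up compression $M_{d+1}$ producing $\pi_d M_{d+1}^\ast M_{d+1}\pi_d = M^\ast M + K$ with $\langle Kq,q\rangle = a_{d+1,d}^2|\langle q,P_d\rangle|^2$, whence $[M^\ast,M]+K\geq 0$ and $\lambda_-\geq -a_{d+1,d}^2$. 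The implication $a_{d+1,d}=0\Rightarrow M$ normal (then $K=0$, so $[M^\ast,M]\geq 0$, and a traceless positive semidefinite matrix vanishes) is also correct and is the paper's argument.

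The gap is exactly where you locate it: the converse implication, $M$ normal $\Rightarrow a_{d+1,d}=0$. None of your attempts closes it, and the last one cannot: normality of $M$ yields, via Theorem \ref{real-pos}, the representation $L(p\ol{q})=\sum_k c_k p(a_k)\ol{q(a_k)}$ only for $\deg(p\ol{q})\leq 2d+1$, i.e.\ excluding the bidegree $(d+1,d+1)$ corner; but $a_{d+1,d}^2 = L(|zP_d|^2)-\|MP_d\|^2$ is precisely the discrepancy between $L$ and the representing atomic measure on a hermitian square of bidegree $(d+1,d+1)$, which the quadrature does not control. So ``the quadrature extending to degree $d+1$'' is exactly what is unavailable, and from $[M^\ast,M]=0$ and $[M^\ast,M]+K\geq 0$ one indeed only learns $K\geq 0$. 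You should know that the paper's own proof (``we collect the above computations'') supplies nothing more for this direction either, and the implication appears to fail as stated: take $d=0$ and $L=\int_{\D}\cdot\,dA$, the area measure on the unit disc. All hypotheses hold, $M=M_0=(0)$ is trivially normal, yet $a_{1,0}=\langle zP_0,P_1\rangle=\kappa_0/\kappa_1=1/\sqrt{2}\neq 0$. What the computations genuinely prove is the chain $a_{d+1,d}=0\iff M_{d+1}$ leaves $\C_d[z]$ invariant $\Rightarrow M$ normal, together with the eigenvalue bound; the reverse of the last arrow needs either an additional hypothesis or a different (and currently missing) argument, so you were right not to pretend the trace identities settle it.
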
 

Remark that the non-negativity assumption in the statement implies that the $(d+2) \times (d+2)$ Hessenberg matrix $\tilde{M} = (a_{jk})_{j,k=0}^{d+1}$ is well defined, hence the normality of its
submatrix $M$ is equivalent to the fact that $\tilde{M}$ leaves invariant the subspace $\C_d[z]$. More precisely,
$$ \tilde{M} = \left( \begin{array}{cc}
                           M&\ast\\
                           u& a_{d+1,d+1}\\
                           \end{array} \right), \ \ {\rm where} \ \ \ u = (0,0,\ldots, 0, a_{d, d+1}).$$
While there is a simpler, operator theoretic derivation of the conclusion of the above theorem, we preferred to enter minimally into the structure of complex orthogonal polynomials for alerting the 
reader about the potential of the Hessenberg matrix approach. For the linear numerical analyst or approximation theory expert it is hardly a surprise to put the Hessenberg matrix representation at work,
see for instance \cite{HJ,Simanek}, to only touch the ample literature devoted to the subject.

With the above understanding of the normality of the compressed multiplier $M$, we can return to the main result of the previous section, with a quantitative criterion.

\begin{cor} Let $L : \C_{2d+4}[z,\ol{z}] \longrightarrow \C$ be a real functional which is non negative on all real squares.
Assume that $L$ is strictly positive on hermitian squares of bi-degree less than or equal to $(d,d)$.

There are at most $d+1$ points $a_k \in \C$ and weights $c_k>0$, so that
$$
 L(p(z) \ol{q(z)}) = \sum_{k=1}^N c_k p(a_k) \overline{q(a_k)} 
$$
for all $p,q \in \C_{d+1}[z]$, subject to $\deg(p\overline{q}) \leq 2d+1$
if and only if the Hessenberg matrix entry $a_{d, d+1}$ vanishes.
\end{cor}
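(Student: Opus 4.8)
The plan is to deduce this corollary by combining Theorem \ref{real-pos} with Theorem \ref{Hess}, so that almost no new computation is required. Theorem \ref{real-pos} states that the quadrature formula (\ref{qd}) holds in the prescribed range of bi-degrees if and only if the only non-positive eigenvalue of the quadratic form $\sigma_M$ is zero; equivalently, after the block-diagonalization recalled in the previous section, if and only if the self-commutator $[M^\ast,M]$ has no strictly negative eigenvalue, i.e. $[M^\ast,M]\geq 0$. Since $\operatorname{trace}[M^\ast,M]=0$ always, $[M^\ast,M]\geq 0$ forces $[M^\ast,M]=0$, that is, $M$ is normal. So the content to be transferred is: the quadrature holds $\iff M$ is normal.

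Next I would invoke the last sentence of Theorem \ref{Hess}, which asserts precisely that $M=(a_{jk})_{j,k=0}^d$ is normal if and only if the subdiagonal-type entry $a_{d+1,d}$ vanishes. The only point needing a word of care is the indexing: the corollary phrases the criterion in terms of $a_{d,d+1}$ (the super-diagonal corner entry of the enlarged Hessenberg matrix $\tilde M$), whereas Theorem \ref{Hess} uses $a_{d+1,d}$. These are harmless to interchange here because $a_{d,d+1}=\langle M_{d+1}P_{d+1},P_d\rangle$ and $a_{d+1,d}=\langle M_{d+1}P_d,P_{d+1}\rangle = \kappa_d/\kappa_{d+1}$; what actually enters the estimate $\lambda_-\geq -a_{d+1,d}^2$ and the rank-one perturbation $K$ (whose nonzero entry is $a_{d+1,d}^2|\langle q,P_d\rangle|^2$) is the modulus of the relevant corner entry, and vanishing of one corner entry is equivalent to vanishing of the other in the situation at hand — indeed $a_{d+1,d}\neq 0$ always, so the genuinely variable quantity is $a_{d,d+1}$, and one checks that the perturbation term $K$ should really be read with $|a_{d,d+1}|^2$. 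I would therefore state the corollary's proof as: by Theorem \ref{real-pos} the quadrature is equivalent to normality of $M$; by Theorem \ref{Hess} (and the block-diagonalization identity giving $[M^\ast,M]+K\geq 0$ with $K$ supported on the $(d,d)$ entry and equal there to $a_{d,d+1}^2$) normality of $M$ is equivalent to $a_{d,d+1}=0$; chaining the two equivalences yields the claim.

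Concretely the steps are: (1) recall that $\sigma_M$ block-diagonalizes to $I\oplus[M^\ast,M]$, so "the only non-positive eigenvalue of $\sigma_M$ is $0$" is the same as "$[M^\ast,M]\geq 0$"; (2) use $\operatorname{trace}[M^\ast,M]=0$ to upgrade this to "$[M^\ast,M]=0$", i.e. $M$ normal; (3) apply Theorem \ref{real-pos} to get: quadrature (\ref{qd}) $\iff$ $M$ normal; (4) apply the final clause of Theorem \ref{Hess}: $M$ normal $\iff$ $a_{d,d+1}=0$ (equivalently $a_{d+1,d}$-type corner entry $=0$, with the identification above); (5) combine (3) and (4). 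Optionally, remark that in the non-degenerate setting $N=d+1$, as already noted after Theorem \ref{real-pos}.

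The only real obstacle — and it is a bookkeeping one, not a mathematical one — is making the $a_{d,d+1}$ versus $a_{d+1,d}$ identification airtight, since the earlier displays in Section 4 are written with $a_{d+1,d}^2$ while the corollary's hypothesis names $a_{d,d+1}$. I expect this to be resolved by noting that the rank-one perturbation $K$ arising from the one-step-up compression $M_{d+1}$ has its single nonzero entry equal to $|a_{d,d+1}|^2$ (the overlap of $zP_d$ with $P_{d+1}$ seen from above), so that the condition "$M$ normal" is equivalent to "$K=0$", hence to "$a_{d,d+1}=0$"; and separately $a_{d+1,d}=\kappa_d/\kappa_{d+1}>0$ never vanishes, confirming that the criterion must be stated with the super-diagonal corner entry. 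With that clarification the corollary is an immediate corollary in the literal sense, and the write-up is two or three sentences long.
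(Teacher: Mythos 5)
Your overall architecture is exactly the paper's: the corollary is nothing more than the concatenation of Theorem \ref{real-pos} (quadrature in the stated range of bi-degrees $\iff$ $\sigma_M\geq 0$ on $A_d\oplus A_d$ $\iff$ $[M^\ast,M]\geq 0$ $\iff$, by the vanishing of the trace, $M$ normal) with the last clause of Theorem \ref{Hess} (normality of $M$ $\iff$ vanishing of the corner entry of the extended Hessenberg matrix). Steps (1), (2), (3) and (5) of your outline are fine and require no further computation.

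The problem is your resolution of the $a_{d,d+1}$ versus $a_{d+1,d}$ discrepancy, which goes in the wrong direction and, taken literally, produces a false criterion. The quantity that actually enters the rank-one perturbation is $\langle Kq,q\rangle=|\langle zP_d,P_{d+1}\rangle|^2\,|\langle q,P_d\rangle|^2$, i.e.\ the \emph{sub}-diagonal corner entry $\langle M_{d+1}P_d,P_{d+1}\rangle$ (which in the paper's convention $a_{jk}=\langle MP_k,P_j\rangle$ is $a_{d+1,d}$); the occurrences of ``$a_{d,d+1}$'' in the corollary and in the remark are index slips for this same quantity, not references to the super-diagonal entry $\langle zP_{d+1},P_d\rangle$. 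Your argument that ``$a_{d+1,d}=\kappa_d/\kappa_{d+1}>0$ never vanishes'' tacitly assumes that $P_{d+1}$ exists as a \emph{unit} vector, i.e.\ that $L$ is non-degenerate on hermitian squares of degree $d+1$; but strict positivity is only assumed up to bi-degree $(d,d)$, and the quadrature case is precisely the degenerate one, in which $A_{d+1}=A_d$, $\pi_{d+1}=\pi_d$, $K=0$ and the corner sub-diagonal entry is $0$. Your claim that the two corner entries vanish simultaneously is also inconsistent with your own observation that one of them is ``always'' positive. The super-diagonal criterion is refuted by the paper's third example: for arc length on the unit circle, $P_j=z^j$, so $\langle zP_{d+1},P_d\rangle=L(z^{d+2}\ol{z}^{d})=0$, yet $M_d$ is the Jordan block, hence not normal, and no $(d+1)$-node quadrature exists; the correct indicator $\langle zP_d,P_{d+1}\rangle=1$ does not vanish there. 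So keep your chain of implications, but in step (4) read the corollary's $a_{d,d+1}$ as $\langle zP_d,P_{d+1}\rangle$, the norm of the component of $zP_d$ orthogonal to $\C_d[z]$, whose vanishing is equivalent to the existence of a degree-$(d+1)$ null vector of $\|\cdot\|_L$ in $\C_{d+1}[z]$.
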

  
  Summing up, the main observation supporting the preceding Corollary is stated in the following rather surprising property of the associated Hessenberg matrix.
  
  \begin{prop} Let $L : \C_{2d+4}[z,\ol{z}] \longrightarrow \C$ be a real functional which is non negative on all real squares.
Assume that $L$ is strictly positive on hermitian squares of bi-degree less than or equal to $(d,d)$ and denote by $M_k = \pi_k M_z \pi_k$
the associated Hessenberg matrix of order $k, \  k \leq d+1$.

The following are equivalent:

\begin{enumerate}
\item $M_d$ is a normal matrix,

\item $\det ([M_d^\ast,M_d]+ \epsilon I) \geq 0$ for $\epsilon >0$,

\item The entry $a_{d, d+1}$ in $M_{d+1}$ vanishes,

\item$M_{d+1}$ leaves invariant the subspace $\C_d[z]$.

\end{enumerate} 

\end{prop}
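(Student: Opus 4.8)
The plan is to establish the cycle of implications $(1) \To (2) \To (3) \To (4) \To (1)$, or equivalently to show that all four conditions are pairwise equivalent by chaining together facts already proved in the preceding section. Most of the work has in fact been done inside the proof of Theorem \ref{Hess}, so the main task is to assemble those pieces in a clean logical order rather than to prove anything genuinely new.

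First I would observe that $(1) \iff (2)$ is essentially immediate from the min-max analysis: the block-diagonalization identity in Section 4 shows that $[M_d^\ast, M_d]$ has at most one negative eigenvalue, so $[M_d^\ast, M_d] \geq 0$ (i.e. $M_d$ hyponormal, hence normal since the trace of a self-commutator vanishes) is equivalent to saying that the smallest eigenvalue is non-negative, which is exactly the statement that $\det([M_d^\ast, M_d] + \epsilon I) \geq 0$ fails to change behaviour — more carefully, a self-adjoint matrix is positive semidefinite iff $\det(A + \epsilon I) \geq 0$ for all $\epsilon > 0$, and in our case the single possibly-negative eigenvalue makes this determinantal condition tractable. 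I would spell out the elementary linear-algebra lemma that for a Hermitian matrix $A$ with at most one negative eigenvalue, $A \geq 0 \iff \det(A) \geq 0$, and note $\det([M_d^\ast,M_d]+\epsilon I) = \prod(\lambda_j + \epsilon)$, so condition (2) forces $\lambda_- \geq 0$.

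Next, $(1) \iff (3)$: Theorem \ref{Hess} already states precisely that $[M_d^\ast, M_d] = 0$ if and only if $a_{d+1,d} = 0$, and since the Hessenberg matrix is the representation with respect to the (real-leading-coefficient) orthonormal complex orthogonal polynomials, $a_{d+1,d}$ and $a_{d,d+1}$ are related — indeed $a_{d+1,d} = \kappa_d/\kappa_{d+1} \neq 0$ always, so I must be careful: the relevant vanishing entry in the Corollary and Proposition is $a_{d,d+1}$, the \emph{super}-corner entry, not the sub-diagonal one. The computation in Section 4 shows the rank-one correction $K$ has norm $a_{d+1,d}^2$ in one place and the invariant-subspace remark uses $u = (0,\dots,0,a_{d,d+1})$; I would reconcile these by recalling the identity $\langle z P_d, P_{d+1}\rangle = a_{d+1,d}$ versus the entry appearing in $\tilde M$ above $M$, and track which normalization makes $[M_d^\ast,M_d]+K \geq 0$ with $K$ the rank-one matrix whose nonzero entry is $|a_{d,d+1}|^2$ (or $a_{d+1,d}^2$ — these coincide up to the chosen conventions). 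This bookkeeping about which Hessenberg entry is the obstruction is the step I expect to be the main obstacle, since the paper uses both $a_{d+1,d}$ and $a_{d,d+1}$ in nearby displays and the reader must be convinced they play symmetric roles here.

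Finally, $(3) \iff (4)$ is the structural remark immediately following Theorem \ref{Hess}: writing $\tilde M = M_{d+1}$ in block form with $M = M_d$ in the top-left corner and the off-diagonal block $\ast = (0,\dots,0,a_{d,d+1})^T$ feeding $\C_d[z]$ out of the $(d+1)$-st coordinate, the subspace $\C_d[z]$ is $M_{d+1}$-invariant exactly when that block vanishes, i.e. exactly when $a_{d,d+1} = 0$. And once $\C_d[z]$ is invariant under $M_{d+1}$, the compression $M_d$ agrees with the restriction, so $M_d^\ast$ is also a compression-restriction and one computes directly that $[M_d^\ast,M_d]$ picks up no contribution from the complement — giving back $(1)$. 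I would close the loop by citing Lemma \ref{compression} and the spectral/normality discussion to confirm consistency. The write-up is then a short paragraph per implication, with the only subtlety being the index-tracking in $(3)$.
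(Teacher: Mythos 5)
Your reduction of $(1)\iff(2)$ to the fact that $[M_d^\ast,M_d]$ has at most one negative eigenvalue is correct and is exactly the paper's argument: if the bottom eigenvalue $\lambda_-$ were negative then $\det([M_d^\ast,M_d]+\epsilon I)=\prod_j(\lambda_j+\epsilon)<0$ for $0<\epsilon<-\lambda_-$, and conversely normality makes the determinant $\epsilon^{d+1}>0$. Likewise $(3)\iff(4)$ and $(3)\To(1)$ are fine in substance. But two points of your bookkeeping need repair. First, the two entries you try to ``reconcile'' do \emph{not} coincide up to conventions: the obstruction to the invariance of $\C_d[z]$ under $M_{d+1}$ is the bottom \emph{row} $u=(0,\dots,0,\langle zP_d,P_{d+1}\rangle)$, i.e.\ the subdiagonal entry $a_{d+1,d}=\kappa_d/\kappa_{d+1}=\|(\pi_{d+1}-\pi_d)zP_d\|$, whereas the top-right \emph{column} of $\tilde M$ (the entries $a_{j,d+1}=\langle zP_{d+1},P_j\rangle$, $j\le d$) is in general fully nonzero and irrelevant to invariance; the $a_{d,d+1}$ appearing in the statement is a typo for $a_{d+1,d}$, and your block $\ast=(0,\dots,0,a_{d,d+1})^T$ conflates the two. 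Second, your direct argument for $(4)\To(1)$ (``$[M_d^\ast,M_d]$ picks up no contribution from the complement'') is not valid as stated --- the restriction of an operator to an invariant subspace need not be normal; the correct route is $(4)\To(3)\To(1)$ via the inequality $[M_d^\ast,M_d]+K\ge 0$ with $\|K\|=a_{d+1,d}^2$, which you do invoke.

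The genuine gap is $(1)\To(3)$. You dispose of it by citing Theorem~\ref{Hess}, but the ``only if'' half of that theorem is asserted there without proof, and nothing in the paper or in your proposal supplies one: when $[M_d^\ast,M_d]=0$ the inequality $[M_d^\ast,M_d]+K\ge 0$ says nothing about $K$, and the quadrature produced by normality is exact only for $\deg(p\ol{q})\le 2d+1$, while $a_{d+1,d}^2=\kappa_d^2\,L(|h|^2)$ (with $h$ the monic orthogonal polynomial of degree $d+1$) involves the moment $L(|z|^{2d+2})$ of degree $2d+2$, which that quadrature does not control. In fact the implication is false. Take $d\ge 1$ and $L(f)=\int f\,d\mu$ with $\mu$ a positive measure supported on at least $d+2$ points of the real axis, e.g.\ $\mu=\tfrac13(\delta_{-1}+\delta_0+\delta_1)$ for $d=1$. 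All hypotheses of the Proposition hold, and $M_d=\pi_d M_x\pi_d$ is the compression of a self-adjoint multiplication operator, hence self-adjoint and in particular normal; yet the degree-$(d+1)$ monic orthogonal polynomial has strictly positive $L$-norm, so $a_{d+1,d}>0$ and $\C_d[z]$ is not $M_{d+1}$-invariant. (This is classical Gauss quadrature: $d+1$ nodes are exact to degree $2d+1$ but not $2d+2$.) So your plan can only deliver $(1)\iff(2)$, $(3)\iff(4)$ and $(3)\To(1)$; the missing implication $(1)\To(3)$ cannot be patched, because the statement itself fails without an additional hypothesis ruling out such degenerate (collinear-support) functionals.
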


To complete the proof, it suffices to recall that the self-commutator $[M^\ast,M]$ can possess at most one negative eigenvalue. Condition $(2)$ simply states that $[M^\ast,M]$ cannot 
have a negative eigenvalue, hence $[M^\ast,M] \geq 0$, hence $M$ is normal. We stress that in general condition (2) offers a numerical certificate for a symmetric matrix with at most one 
negative eigenvalue to be non-negative.

So far, we have only used the positivity of the functional $L$ on hermitian squares and on squares of the form $|f(z) + \ol{z} g(z)|^2$. A whole hierarchy of stronger non-equivalent conditions, for instance on the
Hessenberg matrix $M$, can be derived from the positivity of $L$ on elements $|P(z,\ol{z})|^2$ where $\deg_{\ol{z}}P \leq k$. This later property, called $k$-hyponormality, is analyzed in detail and put at work in \cite{CP}.

\section{Examples}

In this section we gather a few simple examples which support the general theme of this note: that combining complex variables with positivity of linear functionals 
is leading to useful, sometimes unexpected, consequences.

\begin{example} Contrary to the tacit convention of this note, we start with a scalar product rather than a potential integration functional with positivity
assumption. Specifically, let $[-a,a]$ be a compact interval of the real line and consider the Dirichlet type inner product:
$$ \langle p, q \rangle = \int_{-a}^a [p(x)\overline{q(x)} + p'(x)\overline{q'(x)}] dx,$$
defined for all polynomials $p,q \in \C[x]$ with complex coefficients. Apparently there is no linear functional $L$, so that
$$ L(p\overline{q}) = \langle p, q \rangle.$$
One can remedy this by passing to complex coordinates. Namely, define for $p, q \in \C[z]$ the sesquilinear form
$$ \Lambda(p\overline{q}) = \int (1+\frac{1}{4}\Delta)(p\overline{q}) d\mu,$$
where $\mu$ is any rapidly decreasing at infinity positive measure. Above $\Delta$ is Laplace operator, and its factorization via complex variables
$$ \Delta = 4 \frac{\partial}{\partial z} \frac{\partial}{\partial \ol{z}}$$ implies
$$  \Lambda(p\overline{q}) = \int (p\overline{q} + p'\overline{q'}) d\mu,$$
when this time $p'(z), q'(z)$ denote complex derivatives. By choosing $d\mu = \chi_{[-a,a]} dx$ one finds unexpectedly
$$ \Lambda(p\ol{q}) =  \langle p, q \rangle, \ \ p,q \in \C[z].$$
The multiplier $M_z$ is well defined on the whole algebra $\C[z]$, but its compressions to prescribed degrees are not normal.
However, a familiar computation borrowed from Sturm-Liouville theory shows that one can restrict $M$ to a space of polynomials where it is. To be precise,
$$ \langle M_z p(z), q(z) \rangle + \langle p(z), M_z q(z) \rangle = $$ $$ \int_{-a}^a [ xp(x)\ol{q(x)} + p(x) \ol{xq(x)} + (xp'(x) + p(x))\ol{q'(x)} +p'(x) \ol{(x q'(x) + q(x))}] dx=
$$ $$ 2 \int_{-a}^a (1+\frac{1}{4}\Delta)(p\overline{q}) x dx + \int_{-a}^a [p(x)\ol{q'(x)} +p'(x) \ol{q(x)}] dx.$$
Fix a dimension $d$ and consider the subspace $V(e)_d$ of even polynomials vanishing at $\pm a$, of degree less or equal than $d$, or the subspace $V(o)_d$ of odd polynomials
vanishing at $\pm a$. Along every one of these two subspaces the last to integrals vanish. Hence the compressions $M_e$, respectively $M_o$ of $M_z$ to these subspaces are skew symmetric,
i.e. $M_e + M_e^\ast = 0,$ respectively $ \ M_o + M_o^\ast = 0$. Hence $M_o$ and $M_e$ are normal with purely imaginary spectra.

In conclusion, there are quadrature formulas of the form
$$ \int_{-a}^a [p(x)\overline{q(x)} + p'(x)\overline{q'(x)}] dx = \sum_{k=1}^N c_k p(i a_k) \ol{q(ia_k)},$$
with $c_k >0$ and $a_k \in \R$ for all $k, 1 \leq k \leq N$, valid for both polynomials $p,q$ either of the form
$ (z^2-a^2) f(z^2)$, or $(z^2-a^2) z f(z^2)$ and $f \in \C_n[z]$ with a prescribed $n$. Counting degrees we can specify $N \leq n+1$.
The complex location of the quadrature nodes is another strong indication that passing from real to complex variables is very natural in this case.

\end{example}

\begin{example} To prove that the minimal node number (producing a so called Gaussian type quadrature) in Theorem \ref{real-pos} is needed, we simply look at a counting measure supported by the
vertices of a regular $n$-gon.

Indeed, let $n>2$ be a positive integer and let $\epsilon = e^{i 2\pi/n}$ be the primitive root of order $n$ of unity. The measure
$d\nu = \frac{1}{n}(\delta_{1} + \delta_{\epsilon}+ \ldots + \delta_{\epsilon^{n-1}})$ has complex moments concentrated on the diagonal, up to degree $n$:
$$ \int z^k \ol{z}^\ell d\nu = 0, \ \ 0 \leq k,\ell \leq n, \ k \neq \ell.$$
The associated complex orthonormal polynomials are ${\bf 1}, z, \ldots, z^n$. Let $d<n$ and consider the orthogonal projection $\pi_d$ of the finite Hilbert space
$L^2(\nu)$ onto the space $A_d$ of complex polynomials of degree less than or equal to $d$. The compressed matrix $M_d = \pi_d M_z \pi_d$ of the multiplier $M_z$
is the Jordan block of size $(d+1)\times (d+1)$, hence far from being normal. However, a finite point quadrature exists (with more than $d+1$ nodes)
$$ \langle p(M_d){\bf 1}, q(M_d){\bf 1} \rangle = \int p \ol{q} d\nu,$$
whenever $\deg(p) \leq d+1$ and $\deg(q) \leq d$.

\end{example}

\begin{example}

A non-negative real functional which is positive on hermitian squares is offered by the integration with respect to arc length on the unit circle:
$$ L( z^j \ol{z}^k) = \delta_{jk}, \ \ j,k \geq 0.$$
The associated Hessenberg matrix of order $(d+1) \times (d+1)$ is the Jordan block, that is the Toeplitz matrix with equal entries to $1$ on the sub diagonal 
and zero elsewhere. The complex orthogonal polynomials are $P_j(z)=z^j, \ \ j \geq 0.$

Note that
$$ L(|z|^2 q(z,\ol{z})) = L( q(z,\ol{z})) $$
for any polynomial $q \in \C[z, \ol{z}]$
and in particular 
$$ L(|1-\ol{z} z|^2) =0.$$
In other terms the form $\sigma_M$ and the self-commutator $[M^\ast,M]$ may be degenerate, in spite of the fact that $L$ is positive definite on hermitian squares, that is:
$$ L(|f(z)|^2) = 0 \ \ \Rightarrow f =0.$$
\end{example}

\end{document}